%% if you are submitting an initial manuscript then you should have submission as an option here
%% if you are submitting a revised manuscript then you should have revision as an option here
%% otherwise options taken by the article class will be accepted
\documentclass[revision]{FPSAC2024}
%% but DO NOT pass any options (or change anything else anywhere) which alters page size / layout / font size etc

%% note that the class file already loads {amsmath, amsthm, amssymb}

\usepackage{tikz-cd}
% macros for tableaux
%% -----------------------------------------------------
\usepackage{tikz, tikz-3dplot, pgfplots}
\usepackage{bm, relsize}
\usepackage{tkz-graph}
\usetikzlibrary[positioning,patterns] % tikz libraries for relative positioning and silly patterns

\newtheorem{theorem}{Theorem}

\newtheorem{defn}{Definition}
\newtheorem{lem}{Lemma}
\newtheorem{proposition}{Proposition}

\newtheorem{remark}{Remark}

\newcommand{\la}{\lambda}

\newcommand{\CC}{\mathbb{C}} % complex numbers
 % real numbers
 % rational numbers
 % nonnegative integers
\newcommand{\PP}{\mathbb{P}} % positive integers
\newcommand{\ZZ}{\mathbb{Z}} % integers
\newcommand{\id}{\operatorname{id}} % identity map
\newcommand{\kk}{\mathbf{k}} % our base ring
\newcommand{\QSym}{\operatorname{QSym}}

\newcommand{\SYT}{\operatorname{SYT}}
\newcommand{\SSYT}{\operatorname{SSYT}}
\newcommand{\std}{\operatorname{std}}

\newcommand{\nega}{\operatorname{neg}}
\newcommand{\Des}{\operatorname{Des}}

\newcommand{\Peak}{\operatorname{Peak}}

% For shuffle product symbol
\makeatletter
\providecommand*{\shuffle}{%
  \mathbin{\mathpalette\shuffle@{}}%
}
\newcommand*{\shuffle@}[2]{%
  % #1: math style
  % #2: unused
  \sbox0{$#1\vcenter{}$}%
  \kern .15\ht0 % side bearing
  \rlap{\vrule height .25\ht0 depth 0pt width 2.5\ht0}%
  \raise.1\ht0\hbox to 2.5\ht0{%
    \vrule height 1.75\ht0 depth -.1\ht0 width .17\ht0 %
    \hfill
    \vrule height 1.75\ht0 depth -.1\ht0 width .17\ht0 %
    \hfill
    \vrule height 1.75\ht0 depth -.1\ht0 width .17\ht0 %
  }%
  \kern .15\ht0 % side bearing
}
\makeatother

% Tyson's macros for Young tableau squares and big dominoes
\tikzstyle{bsq}=[rectangle, draw, thick, minimum width=1cm, minimum height=1cm] 
\tikzstyle{bver}=[rectangle, draw, thick, minimum width=1cm, minimum height=2cm]
\tikzstyle{bhor}=[rectangle, draw, thick, minimum width=2cm, minimum height=1cm]

\tikzstyle{bsqg}=[rectangle, draw=gray!30!white, thick, fill=gray!30!white, minimum width=1cm, minimum height=1cm] 
\tikzstyle{bverg}=[rectangle, draw=gray!30!white, thick, fill=gray!30!white, minimum width=1cm, minimum height=2cm]
\tikzstyle{bhorg}=[rectangle, draw=gray!30!white, thick, fill=gray!30!white, minimum width=2cm, minimum height=1cm]

%% define your title in the usual way
\title{Quasisymmetric expansion of Hall-Littlewood symmetric functions}

%% define your authors in the usual way
%% use \addressmark{1}, \addressmark{2} etc for the institutions, and use \thanks{} for contact details
\author[D. Grinberg \and E.A. Vassilieva]{Darij Grinberg\thanks{\href{mailto:darijgrinberg@gmail.com}{darijgrinberg@gmail.com}}\addressmark{1}, \and Ekaterina A. Vassilieva\thanks{\href{mailto:katya@lix.polytechnique.fr}{katya@lix.polytechnique.fr}}\addressmark{2}}

%% then use \addressmark to match authors to institutions here
\address{\addressmark{1}Department of Mathematics, Drexel University, Philadelphia, PA 19104, USA \\ \addressmark{2}LIX, Ecole Polytechnique, Palaiseau, France}

%% put the date of submission here
\received{\today}

%% leave this blank until submitting a revised version
%\revised{}

%% put your English abstract here, or comment this out if you don't have one yet
%% please don't use custom commands in your abstract / resume, as these will be displayed online
%% likewise for citations -- please don't use \cite, and instead write out your citation as something like (author year)
\abstract{In our previous works we introduced a $q$-deformation of the generating functions for enriched $P$-partitions. We call the evaluation of this generating functions on labelled chains, the $q$-fundamental quasisymmetric functions. These functions interpolate between Gessel's fundamental ($q=0$) and Stembridge's peak ($q=1$) functions, the natural quasisymmetric expansions of Schur and Schur's $Q$-symmetric functions. In this paper, we show that our $q$-fundamental functions provide a quasisymmetric expansion of Hall-Littlewood $S$-symmetric functions with parameter $t=-q$.}

%% put your French abstract here, or comment this out if you don't have one
\resume{Dans nos travaux précédents, nous avons introduit une $q$-déformation des fonctions génératrices pour les $P$-partitions  enrichies. Nous nommons l'évaluation de ces fonctions génératrices sur les chaînes étiquetées, les fonctions quasisymétriques $q$-fondamentales. Ces fonctions interpolent entre les fonctions fondamentales de Gessel ($q=0$) et les fonctions de pics de Stembridge ($q=1$) qui sont les expansions quasisymétriques naturelles des fonctions symétriques de Schur et $Q$ Schur. Dans cet article, nous montrons que nos fonctions $q$-fondamentales fournissent une expansion quasisymétrique des fonctions symétriques Hall-Littlewood $S$ avec paramètre $t=-q$.}

%% put your keywords here, or comment this out if you don't have them yet
\keywords{Hall-Littlewood, quasisymmetric functions, enriched $P$-partitions}

%% you can include your bibliography however you want, but using an external .bib file is STRONGLY RECOMMENDED and will make the editor's life much easier
%% regardless of how you do it, please use numerical citations; i.e., [xx, yy] in the text

%% this sample uses biblatex, which (among other things) takes care of URLs in a more flexible way than bibtex
%% but you can use bibtex if you want
\usepackage[backend=bibtex]{biblatex}
\addbibresource{biblio.bib}
%% note the \printbibliography command at the end of the file which goes with these biblatex commands

\begin{document}

\maketitle
%% note that you DO NOT have to put your abstract here -- it is generated by \maketitle and the \abstract and \resume commands above

%%%%%%%%%%%%%%%%%%%%%%%%%%%%%%%%%%
%%%%%%%%%%%%%%%%%%%%%%%%%%%%%%%%%%
%%%%%%%%%%%%%%%%%%%%%%%%%%%%%%%%%%
\section{Introduction}
%%%%%%%%%%%%%%%%%%%%%%%%%%%%%%%%%%
%%%%%%%%%%%%%%%%%%%%%%%%%%%%%%%%%%
%%%%%%%%%%%%%%%%%%%%%%%%%%%%%%%%%%
We define the $q$-fundamental quasisymmetric functions as the $q$-deformed generating functions for enriched $P$-partitions on labelled chains \cite{GriVas22, GriVas23}. These functions naturally interpolate between I. Gessel's fundamental (\cite{Ges84}, $q=0$) and J. Stembridge's peak (\cite{Ste97}, $q=1$) quasisymmetric functions and exhibit most of the nice properties of these two classical families. In particular, when $q$ is not a complex root of unity they span the ring of quasisymmetric functions ($\QSym$). When $q$ is a root of unity, a subfamily of our $q$-fundamentals is the basis of the algebra of extended peaks \cite{GriVas23}, a proper subalgebra of $\QSym$ that coincides with Stembridge's algebra of peaks when $q=1$. Fundamental and peak functions indexed by standard Young tableaux of shape $\la$ are respectively the quasisymmetric expansions of Schur and Schur's $Q$-symmetric functions indexed by $\la$. Finding the analogous families of symmetric functions for general $q$ appears as a natural question. We find out that $q$-fundamental functions provide a similar quasisymmetric expansion of the family $\left (S_\la(X;t)\right)_\la$, the Hall-Littlewood $S$-symmetric functions with parameter $t=-q$. After recalling the required definitions, we state and prove our main result. Finally, we look at some important consequences regarding the quasisymmetric extension of the classical homorphism between $\Lambda$, the algebra of symmetric functions and the subalgebra of $\Lambda$ spanned by Hall-Littlewood functions as well as some Cauchy-like formulas for the $S_\la(X;t)$'s.
%%%%%%%%%%%%%%%%%%%%%%%%%%%%%%%%%% 
\subsection{Integer partitions, Young tableaux and permutation statistics}
%%%%%%%%%%%%%%%%%%%%%%%%%%%%%%%%%%

Let $\PP$ be the set of positive integers and $\PP^{\pm}$ be the set of positive and negative integers ordered by $-1<1<-2<2<-3<3<\dots$. (Our below results would actually be true for \textbf{any} total order on $\PP^{\pm}$, but we have chosen this one by force of habit.)
We embed $\PP$ into $\PP^{\pm}$ and let $-\PP \subseteq \PP^{\pm}$ be the set of all $-n$ for $n \in \PP$. For $n \in \PP$ write $[n] = \{1,\dots, n\}$ and $\mathfrak{S}_n$ the symmetric group on $[n]$.

A \emph{partition} $\la$ of an integer $n$, denoted $\la \vdash n$, is a sequence $\la=(\la_1,\la_2,\dots,\la_p)$ of $\ell(\la)=p$ parts sorted in decreasing order such that $|\la| = \sum_i{\la_i} = n$. We denote the one-part partition $(n)$ simply by $n$. A partition $\la$ is represented as a~Young diagram of $n=|\la|$ boxes arranged in $\ell(\la)$ left-justified rows so that the $i$-th row from the top contains $\la_i$ boxes. Given a second partition $\mu$ with $\ell(\mu) \leq \ell(\la)$ such that $\mu_i \leq \la_i$ (for all $i \leq \ell(\mu)$), delete the $\mu_i$ leftmost boxes of the $i$-th row to get the diagram of \emph{shape} $\la/\mu$.

A Young diagram whose boxes are filled with positive integers such that the entries are weakly increasing along the rows and strictly increasing down the columns is called a \emph{semistandard Young tableau}. If the entries are $1,2,\ldots,n$ (each appearing exactly once), we call it a \emph{standard Young tableau}. We denote $\SYT(\la/\mu)$ (resp. $\SSYT(\la/\mu)$) the set of standard (resp. semistandard) Young tableaux of shape $\la/\mu$. A \emph{marked semistandard Young tableau} is a Young diagram filled with integers in $\PP^{\pm}$ such that the entries are weakly increasing along rows and columns and such that each row contains at most once each negative integer and that each column contains at most once each positive integer. 
\begin{figure} [h]
$$
T_1 = \begin{matrix} 
\resizebox{!}{2.5cm}{%
\begin{tikzpicture}[node distance=0 cm,outer sep = 0pt]
	      \node[bsqg] (1) at (0,  0) {};
	      \node[bsqg] (2) [below = of 1] {};   
	      \node[bsqg] (3) [right = of 1] {};
	      \node[bsq] (4) [right = of 3] {\bf \Large 3};	      
	      \node[bsq] (5) [below = of 2] {\bf \Large 1};   
	      \node[bsq] (6) [right = of 2] {\bf \Large 1};      
	      \node[bsq] (7) [right = of 6] {\bf \Large 5};     
	      \node[bsq] (8) [right = of 4] {\bf \Large 3};     
	      \node[bsq] (9) [right = of 8] {\bf \Large 3};   
	      \node[bsq] (10) [right = of 7] {\bf \Large 10}; 	       	      
	      \node[bsq] (11) [below = of 5] {\bf \Large 6};   
	      \node[bsq] (12) [right = of 5] {\bf \Large 2};  	      
	      \node[bsq] (13) [below = of 11] {\bf \Large 12};  
	      \node[bsq] (14) [right = of 9] {\bf \Large 12};   
\end{tikzpicture}
}
\end{matrix}
~~~
T_2 = \begin{matrix} 
\resizebox{!}{2.5cm}{%
\begin{tikzpicture}[node distance=0 cm,outer sep = 0pt]
	      \node[bsqg] (1) at (0,  0) {};
	      \node[bsqg] (2) [below = of 1] {};   
	      \node[bsqg] (3) [right = of 1] {};
	      \node[bsq] (4) [right = of 3] {\bf \Large -4};	      
	      \node[bsq] (5) [below = of 2] {\bf \Large 2};   
	      \node[bsq] (6) [right = of 2] {\bf \Large -3};      
	      \node[bsq] (7) [right = of 6] {\bf \Large -9};     
	      \node[bsq] (8) [right = of 4] {\bf \Large 4};     
	      \node[bsq] (9) [right = of 8] {\bf \Large 4};   
	      \node[bsq] (10) [right = of 7] {\bf \Large 9}; 	       	      
	      \node[bsq] (11) [below = of 5] {\bf \Large -9};   
	      \node[bsq] (12) [right = of 5] {\bf \Large -3};  	      
	      \node[bsq] (13) [below = of 11] {\bf \Large 18};  
	      \node[bsq] (14) [right = of 9] {\bf \Large 18};   
\end{tikzpicture}
}
\end{matrix}
~~~
T_3 = \begin{matrix} 
\resizebox{!}{2.5cm}{%
\begin{tikzpicture}[node distance=0 cm,outer sep = 0pt]
	      \node[bsqg] (1) at (0,  0) {};
	      \node[bsqg] (2) [below = of 1] {};   
	      \node[bsqg] (3) [right = of 1] {};
	      \node[bsq] (4) [right = of 3] {\bf \Large 4};	      
	      \node[bsq] (5) [below = of 2] {\bf \Large 1};   
	      \node[bsq] (6) [right = of 2] {\bf \Large 2};      
	      \node[bsq] (7) [right = of 6] {\bf \Large 7};     
	      \node[bsq] (8) [right = of 4] {\bf \Large 5};     
	      \node[bsq] (9) [right = of 8] {\bf \Large 6};   
	      \node[bsq] (10) [right = of 7] {\bf \Large 9}; 	       	      
	      \node[bsq] (11) [below = of 5] {\bf \Large 8};   
	      \node[bsq] (12) [right = of 5] {\bf \Large 3};  	      
	      \node[bsq] (13) [below = of 11] {\bf \Large 10};  
	      \node[bsq] (14) [right = of 9] {\bf \Large 11};   
\end{tikzpicture}
}
\end{matrix}
$$
\caption{A semistandard tableau $T_1$, a marked semistandard tableau $T_2$, and a standard tableau $T_3$ of shape $(6,4,2,1,1)/(2,1)$. The descent set of $T_3$ is $\{2,6,7,9\}$.
%while its peak set is $\{2,6,9\}$.
The tableau $T_2$ has $\nega(T_2)= 5$ negative entries.}
\label{fig : YT}
\end{figure}

Denote $\SSYT^\pm(\la/\mu)$ the set of marked semistandard Young tableaux of shape $\la/\mu$. Define the \emph{descent set} of a standard Young tableau $T$ as $\Des(T) = \{1\leq i \leq n-1\mid i $ is in a strictly higher row than $i+1\}$.
%, and define its \emph{peak set} as $\Peak(T)  = \{2\leq i \leq n-1\mid i \in \Des(T)$ and $ i-1 \notin \Des(T)\}$.
Finally, denote the  number of negative entries of a marked tableau $T$ as $\nega(T)$.
See Figure \ref{fig : YT} for some examples.
% \begin{example}
% \label{example : YT}
% Figure \ref{fig : YT} depicts a semistandard, a marked semistandard and a standard Young tableau with their shape and descent and peak set.
% \end{example}

Similarly, the descent set and the peak set of a permutation $\pi$ in $\mathfrak{S}_n$ are the subsets of $[n-1]$ defined as $\Des(\pi) = \{1\leq i \leq n-1\mid \pi(i)>\pi(i+1)\}$ and $\Peak(\pi) = \{2\leq i\leq n-1| \pi(i-1)<\pi(i)>\pi(i+1)\}$.
Finally, the {\it Robinson-Schensted (RS) correspondence} (\cite{Sch61, Sta01}) is a bijection between permutations $\pi$ in $\mathfrak{S}_n$ and ordered pairs of standard Young tableaux $(P,Q)$ of the same shape $\la \vdash n$. This bijection is descent-preserving in the sense that $\Des(\pi) = \Des(Q)$ and $\Des(\pi^{-1}) = \Des(P)$.
%%%%%%%%%%%%%%%%%%%%%%%%%%%%%%%%%%
\subsection{Hall-Littlewood symmetric functions}
%%%%%%%%%%%%%%%%%%%%%%%%%%%%%%%%%%
Consider the set of indeterminates $X = \left\{x_1,x_2,x_3,\ldots\right\}$. Let $\Lambda$ denote the ring of symmetric functions over
$\CC$. We use notations consistent with \cite{Mac99}. Namely, for $\la \vdash n$, let $m_{\lambda}(X)$, $h_{\la}(X)$, $e_{\la}(X)$, $p_{\la}(X)$ and $s_{\lambda}(X)$ denote the \emph{monomial}, \emph{complete homogeneous}, \emph{elementary}, \emph{power sum} and \emph{Schur} symmetric functions over $X$ indexed by $\la$. Fix a parameter $t \in \CC$ and define $q_n(X;t) \in \Lambda$ for all $n \in \ZZ$ as follows: Set $q_0(X;t) = 1$ and $q_n(X;t) = 0$ for all $n < 0$. For all $n > 0$, define $q_n(X; t)$ in finitely many indeterminates by
\begin{equation}
q_n(x_1, x_2, \ldots, x_k ;t) = \left(1-t \right)\sum_{i}x_i^n\prod_{j \neq i}\frac{x_i - tx_j}{x_i - x_j} ;
\end{equation}
then define $q_n(X; t) \in \Lambda$ by letting $k \to \infty$.
The generating function for the $q_n$ is
\begin{equation}
\sum_{n \geq 0}q_n(X;t)u^n = \prod_{i}\frac{1 - x_itu}{1 - x_iu}.
\label{eq : genfun-qn}
\end{equation}
The family $(q_n(X;t))_n$ generates a subalgebra of $\Lambda$ that we denote $\Lambda_t$. In particular, $\Lambda_t$ is a proper subalgebra of $\Lambda$ when $t$ is a root of unity. 
%Define further for any sequence of integers $\al=(\al_1, \al_2, \dots, \al_i, \dots)$
%\begin{equation*}
%q_\al(X;t)= \prod_{i}q_{\al_i}(X;t).
%\end{equation*}
%For positive integers $i$ and $j$, we denote the rising operator $R_{ij}$. We have $R_{ij} \al = (\al_1, \al_2, \dots, \al_i+1,\dots, \al_j-1 \dots)$ and write $R_{ij}q_\al(X;t) = q_{R_{ij}\al}(X;t)$.
%\begin{defn}[Hall-Littlewood $Q$-symmetric functions]
%Let $\la$ be an integer partition, define the $Q$ form of Hall-Littlewood symmetric functions indexed by $\la$ as
%\begin{equation}
%Q_\la(X;t) = \prod_{i < j}\frac{1 - R_{ij}} {1 - tR_{ij}}q_\la(X;t)
%\end{equation} 
%\end{defn}
\begin{defn}[Hall-Littlewood $S$-symmetric functions]
\label{defn.S_HL}
For any skew shape $\la/\mu$, define the \emph{Hall-Littlewood $S$-symmetric function} indexed by $\la/\mu$ as
\begin{equation}
S_{\la/\mu}(X;t) = \det \left (q_{\la_i-\mu_j-i+j}(X;t) \right)_{i,j}.
\end{equation} 
\end{defn}
\noindent Setting $t=0$ in Definition \ref{defn.S_HL}, we obtain $S_{\la/\mu}(X;0) = s_{\la/\mu}(X)$.
When $t=-1$, $S_{\la/\mu}(X;-1)$ is a variant of \emph{Schur's $Q$-function} indexed by $\la/\mu$.

We end this section by defining a classical ring homomorphism.
 \begin{defn}[Homomorphism $\theta_t$]
\label{defn.morph}
Define a $\CC$-algebra homomorphism $\theta_t : \Lambda \longrightarrow \Lambda_t$ by setting
$$\theta_t(h_n)(X) = q_n(X;t) \qquad \text{for all } n \geq 0.$$
\end{defn}
\noindent In particular, %one has $\theta_t(e_n)(X) = q_n(X;t)$ and
as a consequence of Definition \ref{defn.S_HL}, we have $$\theta_t(s_{\la/\mu})(X) = S_{\la/\mu}(X;t).$$ 
It is also easy to see that $\theta_t(p_n)(X) = (1-t^n)p_n(X)$.

%%%%%%%%%%%%%%%%%%%%%%%%%%%%%%%%%%
\subsection{Enriched $P$-partitions and $q$-deformed generating functions}
%%%%%%%%%%%%%%%%%%%%%%%%%%%%%%%%%%
\label{sec : poset}
We recall the main definitions regarding weighted posets, enriched $P$-partitions and their $q$-deformed generating functions. See \cite{Ges84, GriVas21, GriVas22, Sta01, Ste97} for more details.   
\begin{defn}[Labelled weighted poset, \cite{GriVas21}]
A \emph{labelled weighted poset} is a triple $P = ([n],<_P,\epsilon)$ where $([n], <_P)$ is a \emph{labelled poset}, i.e., an arbitrary partial order $ <_P$ on the set $[n]$, and $\epsilon : [n]\longrightarrow \PP$ is a map (called the \emph{weight function}). If $\epsilon(i) = 1$ for all $i \in [n]$, then we denote $\epsilon$ by $1^n$ or just omit it.
\end{defn}
\noindent Each node of a labelled weighted poset is marked with its label and weight (Figure \ref{fig : poset}).
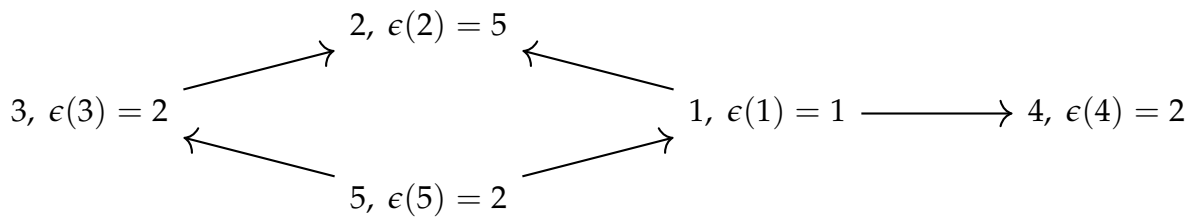
\begin{figure}[htbp]
\begin{center}
\begin{tikzcd}[row sep = small]
                                              &  & {2,\ \epsilon(2) = 5} &  &                                  &  &                       \\
{3,\ \epsilon(3) = 2} \arrow[rru, thick] &  &                                   &  & {1,\ \epsilon(1) = 1} \arrow[llu, thick] \arrow[rr, thick] &  & {4,\ \epsilon(4) = 2} \\
                                              &  & {5,\ \epsilon(5)=2} \arrow[llu, thick]  \arrow[rru, thick]   &  &                                  &  &                      
\end{tikzcd}
\end{center}
\caption{A $5$-vertex labelled weighted poset. Arrows show the covering relations.}
 \label{fig : poset}
 \end{figure}
 \begin{defn}[Enriched $P$-partition, \cite{Ste97}]\label{def : enriched}
Given a labelled weighted poset $P = ([n],<_P,\epsilon)$, an \emph{enriched $P$-partition} is a map $f: [n]\longrightarrow \PP^{\pm} $ that satisfies the two following conditions:
\begin{itemize}
\item[(i)] If $i <_P j$ and $i < j$, then $f(i) < f(j)$ or $f(i) = f(j) \in \PP$.
\item[(ii)] If $i <_P j$ and $i>j$, then $f(i) < f(j)$ or $f(i) = f(j) \in -\PP$.
\end{itemize}
We let $\mathcal{L}_{\PP^{\pm}}(P)$ denote the set of enriched $P$-partitions.
\end{defn}
Note that this set does not depend on $\epsilon$.
But the following definition brings $\epsilon$ into play:
\begin{defn}[$q$-Deformed generating function, \cite{GriVas22}]
Consider the ring $\CC \left[\left[ X \right]\right]$ of formal power series on $X$ and let $q \in \CC$ be an additional parameter. Given a labelled weighted poset $([n], <_P, \epsilon)$, define its generating function $\Gamma^{(q)}([n], <_P, \epsilon) \in \CC \left[\left[ X \right]\right]$ as
\begin{equation*}
\label{eq : weightGamma}
\Gamma^{(q)}([n], <_P, \epsilon) = \sum_{f\in\mathcal{L}_{\PP^\pm}([n],<_{P}, \epsilon)} \prod_{1\leq i\leq n}q^{[f(i)<0]}x_{|f(i)|}^{\epsilon(i)},
\end{equation*}
where $[f(i)<0] = 1$ if $f(i)<0$ and $0$ otherwise.
\end{defn}
Finally, let $X^\pm = \left\{x_{-1}, x_{1}, x_{-2},x_{2},\ldots\right\}$. In the sequel we denote $\varpi$ the substitution homomorphism $\varpi : \CC[[X^\pm]] \longrightarrow \CC[[X]]$ defined by setting $\varpi(x_i)=q^{[i<0]}x_{|i|}$ for $x_i \in X^\pm$.
%%%%%%%%%%%%%%%%%%%%%%%%%%%%%%%%%%
\subsection{$q$-fundamental quasisymmetric functions}
%%%%%%%%%%%%%%%%%%%%%%%%%%%%%%%%%%
We recall results from \cite{GriVas22} and \cite{GriVas23}.
\begin{defn}[$q$-Fundamental quasisymmetric functions]
Given a permutation $\pi=\pi_1\dots\pi_n$ in $\mathfrak{S}_n$, we let $P_{\pi} = ([n],<_\pi,1^n)$ be the labelled weighted poset on the set $[n]$, where the order relation $<_\pi$ is such that $\pi_i <_\pi \pi_j$ if and only if $i < j$, and where all the weights are equal to 1 (see Figure \ref{fig : monomial}).
Define the \emph{$q$-fundamental quasisymmetric function}
\begin{equation*}
L_{\pi}^{(q)} = \Gamma^{(q)}([n],<_\pi, 1^n).
\end{equation*}
\begin{figure}[htbp]
\begin{center}
\begin{tikzcd}[row sep = small]
{\pi_1} \arrow[r, thick] &
{\pi_2} \arrow[r, thick] &
\cdots\cdots\cdots \arrow[r, thick] &
{\pi_n}
\end{tikzcd}
\end{center}
\caption{The labelled weighted poset $P_{\pi}$.}
\label{fig : monomial}
\end{figure}
\end{defn}
\noindent The $q$-fundamental quasisymmetric functions belong to the subalgebra of $\CC \left[\left[ X \right]\right]$ called the ring of \emph{quasisymmetric functions ($\QSym$)}, i.e. for any strictly increasing sequence of indices $i_1 < i_2 <\cdots< i_p$ the coefficient of $x_1^{k_1}x_2^{k_2}\cdots x_p^{k_p}$ is equal to the coefficient of $x_{i_1}^{k_1}x_{i_2}^{k_2}\cdots x_{i_p}^{k_p}$. The specialisations $L_{\pi} = L_{\pi}^{(0)}$ and $K_{\pi}=L_{\pi}^{(1)}$ of $L_{\pi}^{(q)}$ are respectively the Gessel's fundamental \cite{Ges84} and Stembridge's peak \cite{Ste97} quasisymmetric functions indexed by permutation $\pi$. We have the following explicit expression.
\begin{equation}
L_{\pi}^{(q)}=\sum_{\substack{i_{1}\leq i_{2}\leq
\dots\leq i_{n};\\j\in\Peak(\pi)\Rightarrow 
i_{j-1}<i_{j+1}  }}q^{|\{j\in\Des(\pi
)|i_{j}=i_{j+1}\}|}(q+1)^{|\{i_{1},i_{2},\dots, i_{n}\}|}x_{i_{1}}x_{i_{2}}\dots x_{i_{n}}.
\label{eq : Lqq}
\end{equation}
Furthermore $q$-fundamental quasisymmetric functions admit a closed-form product and coproduct.
\begin{proposition}
Let $q \in \CC$, let $\pi \in \mathfrak{S}_n$ and $\sigma \in \mathfrak{S}_m$. The product of $L^{(q)}_{\pi}$ and $L^{(q)}_{\sigma}$  is given by
\begin{equation}
\label{eq : LL}
L^{(q)}_{\pi}L^{(q)}_{\sigma}=\sum_{\tau\in \pi \shuffle \overline{\sigma}}L^{(q)}_{\tau},
\end{equation}
where $\overline{\sigma} = n+\sigma_1\,n+\sigma_2 \dots n+\sigma_m$. Moreover, the coproduct $\Delta:\operatorname*{QSym}\rightarrow \operatorname*{QSym}\otimes\operatorname*{QSym}$ of the Hopf algebra
$\operatorname*{QSym}$ (see \cite[\S 5.1]{GriRei20}) acts on the $q$-fundamental quasisymmetric functions by
\begin{equation*}
\Delta(L^{(q)}_{\pi})
= \sum_{i=0}^n L^{(q)}_{\std(\pi_1\pi_2\dots \pi_i)} \otimes
L^{(q)}_{\std(\pi_{i+1}\pi_{i+2}\dots \pi_n)}.
\end{equation*}
Here, if $\gamma$ is a sequence of non-repeating integers, $\std(\gamma)$ is the permutation whose values are in the same relative order as the entries of $\gamma$.
\end{proposition}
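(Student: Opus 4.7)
The plan is to derive both identities from the poset generating function $\Gamma^{(q)}$. The cornerstone is the $q$-enriched $P$-partition theorem (established in \cite{GriVas22, GriVas23}), which asserts that for any labelled weighted poset $(P,<_P,1^n)$,
\begin{equation*}
\Gamma^{(q)}(P,<_P,1^n) = \sum_{w \in \mathcal{E}(P)} L^{(q)}_w,
\end{equation*}
where $\mathcal{E}(P)$ denotes the set of linear extensions of $P$, viewed as permutations on the labels. Granting this, both halves of the proposition reduce to structural arguments on posets.

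For the product, I would form the disjoint union of the chain $P_\pi$ (on ground set $[n]$) and a shifted copy of $P_\sigma$ on $\{n+1,\dots,n+m\}$, whose associated permutation is precisely $\overline{\sigma} = n+\sigma_1,\dots,n+\sigma_m$. An enriched $P$-partition on a disjoint union of two labelled posets is exactly an ordered pair of enriched $P$-partitions on the two components, and the weight $q^{[f(i)<0]}$ is multiplicative across components, so
\begin{equation*}
\Gamma^{(q)}\bigl(P_\pi \sqcup P_{\overline{\sigma}}\bigr) \;=\; \Gamma^{(q)}(P_\pi)\cdot \Gamma^{(q)}(P_{\overline{\sigma}}) \;=\; L^{(q)}_\pi\cdot L^{(q)}_\sigma.
\end{equation*}
On the other hand, the linear extensions of the disjoint union are exactly the interleavings of the chain $\pi_1,\dots,\pi_n$ with the chain $n+\sigma_1,\dots,n+\sigma_m$, i.e., the elements of $\pi \shuffle \overline{\sigma}$. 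Applying the $q$-enriched $P$-partition theorem to this disjoint union then yields \eqref{eq : LL}.

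For the coproduct, I would use the standard realization of $\Delta$ on $\QSym$ via alphabet doubling: $\Delta(f)(X',X'') = f(X'\sqcup X'')$, where all variables of $X'$ are declared smaller than those of $X''$. Applied to $L^{(q)}_\pi$, this is computed by summing over enriched $P$-partitions $f:[n]\to \PP^\pm \sqcup \PP^\pm$ valued in the doubled signed alphabet whose first copy lies entirely below the second. Because $P_\pi$ is a chain, the sequence $f(\pi_1),\dots,f(\pi_n)$ is weakly increasing, so there is a unique cut index $i$ at which the values transition from the first copy to the second. Restricting $f$ to each side and relabelling via $\std$ produces enriched $P$-partitions on $P_{\std(\pi_1\cdots\pi_i)}$ and on $P_{\std(\pi_{i+1}\cdots\pi_n)}$, and the $q$-weight splits multiplicatively across the two halves. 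Summing over $i = 0,1,\dots,n$ delivers the claimed formula.

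The main subtle point in both arguments is verifying that standardization preserves the combinatorial data governing the enriched $P$-partition conditions and the $q$-weight: which positions are descents, which inequalities are strict versus weak, and which ties must be positive versus negative depend only on the \emph{pattern} of the underlying sub-word of $\pi$, not on the absolute values of its entries. Once this invariance under $\std$ is recorded, together with the multiplicativity of $q^{[f(i)<0]}$ under disjoint unions and alphabet splits, both identities drop out, and no genuinely new combinatorial identity beyond the $q$-enriched $P$-partition theorem is needed.
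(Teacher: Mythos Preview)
The paper does not actually prove this proposition: it is stated as a recall of results from \cite{GriVas22, GriVas23}, with no argument given in the present text. Your proposed argument is the standard and correct one for statements of this type, and is essentially the proof one finds in those references (and, in the special cases $q=0$ and $q=1$, in \cite{Ges84} and \cite{Ste97}): multiplicativity of $\Gamma^{(q)}$ over disjoint unions combined with the linear-extension decomposition gives the product rule, while alphabet doubling together with the unique cut point on a chain gives the coproduct rule. Your remark that the only point requiring care is the invariance of the enriched conditions and the $q$-weight under standardization is exactly right, and there is no gap.
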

According to Equation (\ref{eq : Lqq}), $L_{\pi}^{(q)}$ depends only on $n$ and on the descent set of $\pi$. Thus, we can rename $L_{\pi}^{(q)}$ as $L_{n, \Des \pi}^{(q)}$. This way, the $q$-fundamentals are now indexed by an integer $n$ and a subset of $[n-1]$. We recall two significant results. 
\begin{proposition}[\cite{GriVas22}]
\label{prop : basis} $(L_{n, I}^{(q)})_{n\geq0, I\subseteq[n-1]}$ is a basis of $\QSym$ if and only if $q \in \CC$ is not a root of unity.
\end{proposition}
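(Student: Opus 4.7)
The plan is to reduce to each homogeneous component $\QSym_n$ separately and show that the transition matrix from $(L_{n,I}^{(q)})_{I \subseteq [n-1]}$ to the monomial quasisymmetric basis $(M_{\Comp(J)})_{J \subseteq [n-1]}$ is invertible exactly when $q$ is not a root of unity. Both families have the right cardinality $2^{n-1}$, so the basis property is equivalent to nonvanishing of the determinant $D_n(q)$ of this $2^{n-1} \times 2^{n-1}$ matrix, which is a polynomial in $q$.

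First, using (\ref{eq : Lqq}), I would expand $L_{n,I}^{(q)}$ by sorting the admissible weakly increasing sequences $i_1 \leq i_2 \leq \cdots \leq i_n$ according to their strict-ascent set $J = \{j \in [n-1] : i_j < i_{j+1}\}$. Sequences with a common $J$ collect to $M_{\Comp(J)}$; the descent-equality count becomes $|I \setminus J|$ and the distinct-value count becomes $|J|+1$, while the peak constraint translates into $\{j-1,j\} \cap J \neq \emptyset$ for every $j \in \Peak(I)$. This yields
$$L_{n,I}^{(q)} \;=\; \sum_{J \subseteq [n-1]} c_{I,J}(q)\, M_{\Comp(J)},$$
where $c_{I,J}(q) = q^{|I \setminus J|}(q+1)^{|J|+1}$ when $J$ respects the peak constraint of $I$, and $c_{I,J}(q) = 0$ otherwise. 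Specialising at $q = 0$ recovers Gessel's classical expansion $L_{n,I} = \sum_{J \supseteq I} M_{\Comp(J)}$, whose matrix is unitriangular for the inclusion order, so $D_n(q)$ is a nonzero polynomial with $D_n(0) = 1$.

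For the ``only if'' direction I would produce linear dependencies among the $L_{n,I}^{(q)}$ at each root of unity. At $q = -1$ every column coefficient $(q+1)^{|J|+1}$ vanishes, so $L_{n,I}^{(-1)} = 0$ identically for $n \geq 1$; at $q = 1$ the identity $L_{n,I}^{(1)} = L_{n,I'}^{(1)}$ holds whenever $\Peak(I) = \Peak(I')$ by Stembridge's theory of peak functions, and such coincidences exist already for $n \geq 2$; for other primitive $k$-th roots of unity I would invoke the extended-peak-algebra result of \cite{GriVas23} cited in the introduction, according to which the span of the $L_{n,I}^{(q)}$ is a proper subalgebra of $\QSym$, so that a dimension count in a sufficiently large degree forces $D_n(q) = 0$.

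The hardest step will be the ``if'' direction: showing $D_n(q) \neq 0$ for every $n$ whenever $q$ is not a root of unity. My preferred approach is to exploit the shuffle-product identity (\ref{eq : LL}), which makes the assignment $L_{n,I} \mapsto L_{n,I}^{(q)}$ extend to a graded $\CC$-algebra endomorphism $\phi_q$ of $\QSym$; in each degree, $\phi_q$ is invertible iff it is surjective iff some chosen multiplicative generators of $\QSym$ (for instance the power-sum quasisymmetric functions, or the $M_{(n)}$'s) lie in its image. I would then invert these images as $\CC(q)$-linear combinations of the $L_{n,I}^{(q)}$, where I expect the denominators to collect into products of cyclotomic factors $(1-q^k)$; controlling these denominators uniformly in $n$, presumably by induction using the coproduct formula for $L_\pi^{(q)}$ to reduce to smaller instances, is the main technical obstacle.
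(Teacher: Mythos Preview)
The paper does not prove this proposition at all: it is stated with the attribution \cite{GriVas22} and then immediately followed by the next recalled result. There is therefore no ``paper's own proof'' to compare your attempt against; the argument lives in the cited earlier work.

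As for the proposal itself: your setup is sound. The monomial expansion you extract from \eqref{eq : Lqq} is correct, the specialisation $D_n(0)=1$ via Gessel's unitriangular matrix is the right anchor, and the failures at $q=-1$ and $q=1$ are genuine. For the remaining roots of unity you are leaning on Proposition~\ref{prop : roots}, which in this paper is also merely quoted from \cite{GriVas23}; that is acceptable logically but somewhat circular as a self-contained proof.

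The real gap is the one you flag yourself: the ``if'' direction is not proved, only sketched. Knowing that $D_n(q)$ is a nonzero polynomial with $D_n(0)=1$ tells you nothing about which roots of unity occur as its zeros. Your proposed route---lift to an algebra endomorphism $\phi_q$ via \eqref{eq : LL} and argue surjectivity by inverting generators---does not obviously terminate: you would need, for each $n$, an explicit preimage of (say) $M_{(n)}$ under $\phi_q$ with denominator supported on cyclotomic factors, and you have not produced one. Without either an explicit factorisation of $D_n(q)$ or a concrete inversion of $\phi_q$ on a generating set, the hard half of the statement remains open in your write-up.
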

\begin{proposition}[\cite{GriVas23}]\label{prop : roots} Let $p \in \PP$ and $\rho_p \in \CC$ such that $-\rho_p$ is a primitive $p+1$-th root of unity. For a subset $I$ of $[n-1]$, write $I \subseteq_p [n-1]$ if $I \cup \{0\}$ does not contain more than $p$ consecutive elements. Then $(L_{n, I}^{(\rho_p)})_{n\geq0, I\subseteq_p[n-1]}$ is a basis of a proper subalgebra $\mathcal{P}^p$ of $\QSym$. 
\end{proposition}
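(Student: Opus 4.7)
The plan is to deduce Proposition \ref{prop : roots} from the generic-$q$ basis statement of Proposition \ref{prop : basis} by establishing four things in turn: (a)~linear relations at $q=\rho_p$ that express every $L_{n,I}^{(\rho_p)}$ with $I\not\subseteq_p[n-1]$ as a $\CC$-linear combination of $L_{n,J}^{(\rho_p)}$ with $J\subseteq_p[n-1]$; (b)~linear independence of the family indexed by $I\subseteq_p[n-1]$; (c)~closure of the resulting span under multiplication; (d)~properness of the span inside $\QSym$.

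For step~(a), the governing algebraic identity is $\sum_{j=0}^{p}(-\rho_p)^j=0$, which holds precisely because $-\rho_p$ is a primitive $(p+1)$-th root of unity. Unpacking the monomial expansion (\ref{eq : Lqq}), the coefficient of a fixed weakly increasing sequence $i_1\leq\cdots\leq i_n$ in $L_{n,I}^{(\rho_p)}$ factors as $\rho_p^a(\rho_p+1)^b$, with exponents $a,b$ depending on how the pattern of equalities among the $i_j$'s interacts with $I$, subject to the peak condition. When $I$ contains $p+1$ consecutive elements $k,k+1,\ldots,k+p$ (with the convention $0\in I\cup\{0\}$), I would fix everything outside the block $i_k,\ldots,i_{k+p+1}$ and collect coefficients across all $L_{n,J}^{(\rho_p)}$ obtained by altering $I$ within the block; the root-of-unity identity then produces a reduction formula of the shape
\begin{equation*}
L_{n,I}^{(\rho_p)}=\sum_{S\subset\{k,\ldots,k+p\}}c_{S}\,L_{n,I\triangle S}^{(\rho_p)},
\end{equation*}
each right-hand index $I\triangle S$ having a strictly shorter run at $k$. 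Iterating this reduction lands any offending $I$ inside the family $\{J\subseteq_p[n-1]\}$.

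Step~(c) follows at once from (a) together with the shuffle formula (\ref{eq : LL}): the product $L_{\pi}^{(\rho_p)}L_{\sigma}^{(\rho_p)}$ is a $\ZZ$-linear combination of $L_{\tau}^{(\rho_p)}$'s, each of which (a) rewrites within the spanning family. For step~(b), linear independence can be obtained either by producing a triangular pairing---for each $I\subseteq_p[n-1]$ choose a weakly increasing sequence (e.g.\ with $i_j=i_{j+1}$ exactly when $j\in I$ and the $i_j$'s taken minimal) and track its coefficient vector across all allowed $J$'s to get triangularity with respect to a suitable order---or alternatively by matching dimensions via an independent model (for instance, constructing a dual family indexed by the same combinatorial set and invoking non-degeneracy of a pairing in the spirit of \cite{Ste97}). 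Step~(d) is a direct count: the number of $I\subseteq[n-1]$ whose $I\cup\{0\}$ avoids a run of length $p+1$ grows like $\alpha^n$ with $\alpha<2$ by a standard transfer-matrix argument, and so stays strictly below $\dim_{\CC}\QSym_n=2^{n-1}$.

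The hardest part will be step~(a): pinning down the exact reduction relation requires careful bookkeeping of how the peak condition and the factors $\rho_p^a(\rho_p+1)^b$ interact across a forbidden block, and of how the sum over monomial patterns collapses so as to invoke $\sum_{j=0}^p(-\rho_p)^j=0$ at exactly the right place. Linear independence in (b) is subtle for related reasons, but the case $p=1$ (Stembridge peaks, where $\rho_1=1$) provides a template for generalization; once (a) is in hand the remaining steps are essentially formal.
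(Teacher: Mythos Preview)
The paper does not contain a proof of this proposition: it is quoted verbatim from \cite{GriVas23}, so there is no in-paper argument to compare your proposal against.

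That said, your outline has the right architecture (reduce, then show independence, then closure, then properness), and steps~(c) and~(d) are indeed essentially formal once (a) and (b) are in hand. But steps~(a) and~(b) as written are sketches, not proofs, and the gap in~(a) is real. The monomial formula~(\ref{eq : Lqq}) couples the peak condition (which constrains the admissible sequences $i_1\le\cdots\le i_n$) with the descent count (which governs the power of $q$), and altering $I$ inside a block of consecutive descents changes \emph{both} the peak set and the exponent simultaneously. Your plan to ``fix everything outside the block and collect coefficients'' therefore does not obviously decouple: toggling a single element of $I$ at the boundary of the block can create or destroy a peak, which changes the summation domain itself, not just the weight. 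So the identity $\sum_{j=0}^{p}(-\rho_p)^{j}=0$ is not sitting there waiting to be invoked term-by-term; one needs either a more refined combinatorial identity among the $L^{(q)}_{n,I}$ (typically proved at the level of enriched $P$-partitions or via a dual statement in noncommutative symmetric functions, as in \cite{GriVas23}) or an explicit change of basis that makes the relation visible. Likewise, the triangularity argument you propose for~(b) is plausible in the $p=1$ (Stembridge) case because there one has an explicit expansion in monomial quasisymmetric functions, but for general $p$ you would need to exhibit the triangular order and verify nonvanishing of the diagonal, which is not automatic. In short: the strategy is sound, but the two load-bearing steps remain to be done.
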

For general $q \in \CC$, let $\mathcal{P}^{(q)}$ denote the subalgebra of $\QSym$ spanned by the $(L_{n, I}^{(q)})_{n, I}$. If $q$ is not a root of unity then $\mathcal{P}^{(q)} = \QSym$. If $q=\rho_p$ for some $p \in \PP$ then $\mathcal{P}^{(q)} = \mathcal{P}^p$.

%%%%%%%%%%%%%%%%%%%%%%%%%%%%%%%%%%
%%%%%%%%%%%%%%%%%%%%%%%%%%%%%%%%%%
%%%%%%%%%%%%%%%%%%%%%%%%%%%%%%%%%%
\section{Relating Hall-Littlewood and $q$-fundamentals functions}
%%%%%%%%%%%%%%%%%%%%%%%%%%%%%%%%%%
%%%%%%%%%%%%%%%%%%%%%%%%%%%%%%%%%%
%%%%%%%%%%%%%%%%%%%%%%%%%%%%%%%%%%
The ring of symmetric functions $\Lambda$ is a subalgebra of $\QSym$ and any symmetric function may be expanded in quasisymmetric bases. The relation between Schur functions (i.e Hall-Littlewood $S$-functions with parameter $t=0$) and fundamental quasisymmetric functions is of particular interest. Let $\la /\mu$ be a skew shape. Gessel shows in \cite{Ges84} that
\begin{equation}
\label{equation.S0L}
S_{\la/\mu}(X; 0) = s_{\la/\mu}(X) = \sum_{T \in \SYT(\la/\mu)}L^{(0)}_{\Des(T)}(X).
\end{equation}
On the other hand, Stembridge shows in \cite{Ste97} that
\begin{equation}
\label{equation.S1L}
S_{\la/\mu}(X; -1) = \sum_{T \in \SYT(\la/\mu)}L^{(1)}_{\Des(T)}(X).
\end{equation}
To understand how these relations generalise for general $q$ seems to be a very legitimate question. We state our result and  some significant consequences. 
%%%%%%%%%%%%%%%%%%%%%%%%%%%%%%%%%%
\subsection{Computing the $q$-deformed generating functions on skew diagrams}
Let $\lambda$ and $\mu$ be two partitions such that $\lambda/\mu$ is a skew shape.
Set $n = |\la| - |\mu|$.
Number the boxes of the skew Young diagram of shape $\lambda/\mu$ with the integers $1, 2, \ldots, n$ from left to right and bottom to top (as on Figure~\ref{fig.young_poset}). Define the partial order $<_{\la/\mu}$ on $[n]$ by letting $i <_{\la/\mu} j$ if and only if $i$ lies northwest of $j$ in this numbering.
Denote the labelled poset $([n], <_{\la/\mu})$ by $P_{\lambda/\mu}$.
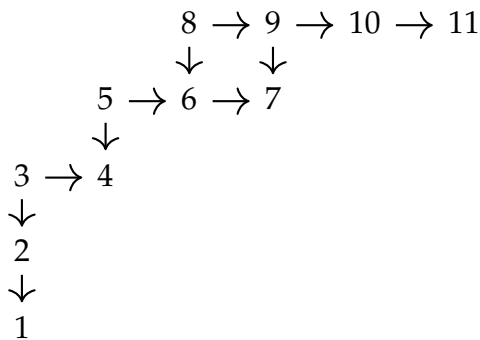
\begin{figure}[htbp]
\begin{center}
\begin{tikzcd}[row sep = small, column sep = small]
&&8\arrow[thick]{d}\arrow[thick]{r}&9\arrow[thick]{d}\arrow[thick]{r}&10\arrow[thick]{r}&11\\
&5\arrow[thick]{d}\arrow[thick]{r}&6\arrow[thick]{r}&7&&\\
3 \arrow[thick]{d} \arrow[thick]{r}&4&&&&\\
2 \arrow[thick]{d}&&&&&\\
1&&&&&
\end{tikzcd}
\end{center}
\caption{The labelled weighted poset $P_{(6,4,2,1,1)/(2,1)}$.}
\label{fig.young_poset}
\end{figure}
Then, the enriched $P_{\lambda/\mu}$-partitions are precisely the marked semistandard Young tableaux of shape $\lambda/\mu$, i.e., we have $\mathcal{L}_{\PP^{\pm}}(P_{\lambda/\mu}) = \SSYT^\pm(\la/\mu)$.
\begin{theorem}
\label{thm.SG}
Let $\lambda/\mu$ be a skew shape. Set $n = |\la| - |\mu|$. The $q$-deformed generating function of $P_{\lambda/\mu}$ is exactly the Hall-Littlewood $S$-symmetric function with parameter $t=-q$.
That is,
\begin{equation}
S_{\la/\mu}(X;-q) = \Gamma^{(q)}([n],<_{\la/\mu}).
\end{equation}
\end{theorem}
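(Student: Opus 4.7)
The plan is to prove Theorem~\ref{thm.SG} via the Jacobi--Trudi expansion of Definition~\ref{defn.S_HL} combined with a Lindström--Gessel--Viennot (LGV) sign-reversing involution, after first settling the single-row case directly.

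The first step is the single-row case: $\Gamma^{(q)}([n], <_{P_{(n)}}) = q_n(X;-q)$. Enriched $P$-partitions on the chain $1<_P 2 <_P\cdots<_P n$ are weakly increasing sequences $f(1)\le\cdots\le f(n)$ in $\PP^\pm$ (in the order $-1<1<-2<2<\cdots$) with no repeated negative entry. Sorting such sequences by their underlying sequence of absolute values $k_1\le\cdots\le k_n$, each maximal block of constant absolute value $v$ admits exactly two sign patterns (all $v$'s, weight $1$, or a single leading $-v$ followed by $v$'s, weight $q$), yielding a factor $(1+q)$ per distinct absolute value. Hence
\begin{equation*}
\Gamma^{(q)}([n],<_{P_{(n)}}) = \sum_{k_1\le\cdots\le k_n}(1+q)^{|\{k_1,\ldots,k_n\}|}x_{k_1}\cdots x_{k_n} = [u^n]\prod_i \frac{1+qux_i}{1-ux_i} = q_n(X;-q)
\end{equation*}
by \eqref{eq : genfun-qn}.

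The second step expands the Jacobi--Trudi determinant,
\begin{equation*}
S_{\la/\mu}(X;-q) = \sum_{\sigma \in \mathfrak{S}_p}\operatorname{sgn}(\sigma)\prod_{i=1}^p q_{\la_i-\mu_{\sigma(i)}-i+\sigma(i)}(X;-q) \qquad (p=\ell(\la)),
\end{equation*}
and, by Step~1, views the right-hand side as a signed sum over tuples $(\sigma; R_1,\ldots, R_p)$ in which $R_i$ is a marked weakly-increasing sequence of length $\la_i-\mu_{\sigma(i)}-i+\sigma(i)$. I place the $i$-th sequence in row $i$ of the plane so that its entries occupy columns $\mu_{\sigma(i)}+1+(i-\sigma(i))$ through $\la_i$, and encode it as a lattice path. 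The pairwise non-intersecting configurations are then in bijection with pairs $(\sigma=\id, T)$ where $T$ is a marked SSYT of shape $\la/\mu$: the non-intersection condition is exactly the column condition for enriched $P$-partitions (strict increase for positives, weak for negatives). A standard LGV tail-swap at the smallest collision point pairs each intersecting configuration with another of opposite $\operatorname{sgn}(\sigma)$ and identical weight $q^{\sum\nega(R_i)}x^{\sum R_i}$---since the swap merely relocates a contiguous signed suffix---so all intersecting contributions cancel.

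The main obstacle lies in defining the ``first intersection'' compatibly with the marks so that the LGV involution is genuinely sign-reversing in the marked setting: one needs an encoding of positive and negative entries into path steps under which a crossing is detected precisely when the column condition fails (positives may not repeat within a column, negatives may). A natural remedy is to draw horizontal steps at integer heights for positive entries and at shifted heights (e.g.\ half-integer) for negative ones, after which the classical tail-swap argument applies verbatim, the marks being carried rigidly by each box. Combined with the identification $\mathcal{L}_{\PP^\pm}(P_{\la/\mu})=\SSYT^\pm(\la/\mu)$ recorded just before the theorem, this yields
\begin{equation*}
S_{\la/\mu}(X;-q) = \sum_{T\in \SSYT^\pm(\la/\mu)} q^{\nega(T)} x^T = \Gamma^{(q)}([n],<_{\la/\mu}),
\end{equation*}
completing the proof.
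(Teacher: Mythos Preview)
Your proof is correct and follows essentially the same route as the paper: both establish the single-row identity $q_n(X;-q)=\Gamma^{(q)}(P_{(n)})$ and then deduce the general case from the Jacobi--Trudi determinant via a Lindstr\"om--Gessel--Viennot cancellation. The only difference is packaging: the paper lifts everything to the alphabet $X^\pm$ and invokes \cite[Theorem~11]{GesVie89} (the determinant formula for tableaux over a semitransitive relation) as a black box, whereas you spell out the path encoding and tail-swap by hand---your half-integer-height trick being precisely the concrete realisation of their abstract semitransitivity hypothesis.
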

\noindent This will be proved in Section \ref{section.proof}. Using Theorem \ref{thm.SG}, we give an explicit quasisymmetric expansion of the Hall-Littlewood $S$-symmetric functions that naturally generalises (\ref{equation.S0L}) and (\ref{equation.S1L}).
\begin{theorem}
\label{thm.SL}
Let $\lambda/\mu$ be a skew shape. The Hall-Littlewood $S$-symmetric function with parameter $t=-q$ is related to $q$-fundamental quasisymmetric functions through
\begin{equation}
S_{\la/\mu}(X;-q) = \sum_{T \in \SYT(\la/\mu)}L^{(q)}_{\Des(T)}(X).
\end{equation}
\end{theorem}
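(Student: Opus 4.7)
The plan is to combine Theorem \ref{thm.SG} with the standard enriched analogue of Gessel's fundamental lemma of $P$-partitions. By Theorem \ref{thm.SG} we have $S_{\la/\mu}(X;-q) = \Gamma^{(q)}([n],<_{\la/\mu})$, so it suffices to prove
\begin{equation*}
\Gamma^{(q)}([n],<_{\la/\mu}) = \sum_{T \in \SYT(\la/\mu)} L^{(q)}_{\Des(T)}(X).
\end{equation*}

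I would first establish the general decomposition lemma: for any labelled weighted poset $P = ([n],<_P,\epsilon)$,
\begin{equation*}
\mathcal{L}_{\PP^\pm}(P) = \bigsqcup_{\pi \in \mathcal{L}(P)} \mathcal{L}_{\PP^\pm}(P_\pi),
\end{equation*}
where $\mathcal{L}(P)$ denotes the set of linear extensions of $<_P$, viewed as permutations of $[n]$. Concretely, given any $f : [n] \to \PP^\pm$, one associates to it the unique permutation $\pi$ such that (a) $\pi$ is a linear extension of $<_P$, and (b) $f(\pi_1), f(\pi_2), \ldots, f(\pi_n)$ is weakly increasing under the total order of $\PP^\pm$, with ties broken by putting the smaller label first when the common value is positive and the larger label first when the common value is negative. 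A short check on the two tie-breaking rules shows that this $\pi$ is well-defined and that $f \in \mathcal{L}_{\PP^\pm}(P)$ if and only if $f \in \mathcal{L}_{\PP^\pm}(P_\pi)$ for this particular $\pi$. Since the weights $\epsilon$ are identical on both sides and the factor $q^{[f(i)<0]} x_{|f(i)|}^{\epsilon(i)}$ in the generating function depends only on $f$ (not on $\pi$), we obtain
\begin{equation*}
\Gamma^{(q)}(P) = \sum_{\pi \in \mathcal{L}(P)} L^{(q)}_{\pi} \cdot (\text{suitable monomial factor from } \epsilon).
\end{equation*}
For $P_{\la/\mu}$ the weight function is $1^n$, so the formula simplifies to $\Gamma^{(q)}(P_{\la/\mu}) = \sum_{\pi \in \mathcal{L}(P_{\la/\mu})} L^{(q)}_{\pi}$.

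Next I would identify linear extensions of $P_{\la/\mu}$ with standard Young tableaux of shape $\la/\mu$: a bijection sends $\pi \in \mathcal{L}(P_{\la/\mu})$ to the tableau $T$ whose box numbered $i$ (in the fixed numbering of the diagram) carries the entry $\pi^{-1}(i)$. Under this bijection, $\Des(\pi)$ coincides with $\Des(T)$ as defined for standard tableaux, because $i+1$ appears in a strictly higher row than $i$ in $T$ if and only if the box where $i+1$ is placed is strictly northwest-unrelated to the box of $i$ and lies in a higher row, which, under our numbering, corresponds exactly to $\pi^{-1}$ having a descent at position $i$. Since $L^{(q)}_{\pi}$ depends only on $n$ and $\Des(\pi)$, the sum collapses to
\begin{equation*}
\Gamma^{(q)}(P_{\la/\mu}) = \sum_{T \in \SYT(\la/\mu)} L^{(q)}_{\Des(T)}(X),
\end{equation*}
which combined with Theorem \ref{thm.SG} yields the claim.

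The main obstacle is the enriched decomposition lemma itself: verifying that the standardization procedure is genuinely a bijection from $\mathcal{L}_{\PP^\pm}(P)$ onto the disjoint union $\bigsqcup_\pi \mathcal{L}_{\PP^\pm}(P_\pi)$ requires checking the two clauses of Definition~\ref{def : enriched} case by case against the tie-breaking rules for positive and negative common values. All other steps (the identification of linear extensions with SYT, and the matching of descent sets) are combinatorial routine.
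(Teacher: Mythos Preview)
Your approach is correct and is essentially the paper's own proof: the paper's standardisation map on marked semistandard tableaux is exactly your assignment of a linear extension $\pi$ to each enriched $P_{\la/\mu}$-partition, and the resulting decomposition $\Gamma^{(q)}(P_{\la/\mu})=\sum_{\pi}L^{(q)}_{\pi}$ followed by the identification $\mathcal{L}(P_{\la/\mu})\cong\SYT(\la/\mu)$ is the same argument phrased in poset language rather than tableau language. One small slip to fix: with your bijection (box $i$ carries entry $\pi^{-1}(i)$) and the paper's convention that $i\in\Des(T)$ means $i$ lies in a strictly \emph{higher} row than $i+1$, one obtains $\Des(T)=\Des(\pi)$, not $\Des(\pi^{-1})$, so the direction in your descent-matching sentence should be reversed.
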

\begin{proof}
Let $n = |\la|-|\mu|$. Given a marked semistandard Young tableau $T \in \SSYT^\pm(\la/\mu)$, define its \emph{standardisation} as the standard tableau $T_{0} \in \SYT(\la/\mu)$ obtained by relabelling the boxes of $T$ with the integers in $[n]$ such that: 
\begin{itemize}
\item The entries of $T$ and $T_{0}$ are in the same relative order (as long as they are distinct in $T$).
\item Identical negative entries of $T$ are relabelled from top to bottom.
\item Identical positive entries of $T$ are relabelled from left to right. 
\end{itemize}
Denote $T^{\std} = T_0$. For instance, in Figure \ref{fig : YT}, $T_1^{\std}= T_2^{\std} = T_3$. Further denote $X^{|T|} = \prod_{i \in \PP^\pm}x_{|i|}^{t_i}$ where $t_i$ is the number of entries equal to $i$ in $T$. Finally, use Theorem \ref{thm.SG} to get
\begin{align*}
S_{\la/\mu}(X;-q) &= \Gamma^{(q)}(P_{\la/\mu}) = \sum_{T \in \SSYT^\pm(\la/\mu)}q^{\nega(T)}X^{|T|}\\
&=\sum_{T_0 \in \SYT(\la/\mu)}\left (\sum_{T \in \SSYT^\pm(\la/\mu),\; T^{\std} = T_0}q^{\nega(T)}X^{|T|}\right).
\end{align*}
End the proof by noticing that the part between parentheses is exactly $L^{(q)}_{\Des(T_0)}(X)$.
\end{proof}
Recall the ring homomorphism $\theta_t$ of Definition \ref{defn.morph}. The following result is a consequence of Theorem \ref{thm.SL}. 
\begin{theorem}
\label{thm.theta}
There is a $\CC$-algebra homomorphism $\Theta_q : \QSym \longrightarrow \mathcal{P}^{(q)}$ such that for any positive integer $n$ and any subset $I\subseteq [n-1]$, we have $\Theta_q \left (L^{(0)}_{n, I} \right) = L^{(q)}_{n, I}$. Then the restriction of $\Theta_q$ to $\Lambda$ is exactly  $\theta_{-q}$ and the ring map diagram of Figure \ref{fig.ring_map} is commutative. 
\end{theorem}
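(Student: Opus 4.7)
The plan is to construct $\Theta_q$ directly from its action on the Gessel basis and then verify the three assertions separately. First, by Proposition \ref{prop : basis} specialised to $q=0$, the family $(L^{(0)}_{n,I})_{n\geq 0,\ I\subseteq[n-1]}$ is the classical Gessel basis of $\QSym$, so I may define $\Theta_q$ as the unique $\CC$-linear map sending $L^{(0)}_{n,I}$ to $L^{(q)}_{n,I}$. The image visibly lies in $\mathcal{P}^{(q)}$ by the very definition of $\mathcal{P}^{(q)}$, and the case $n=0$ ensures $\Theta_q(1)=1$.

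Next, I would establish multiplicativity. The key point is that the shuffle product formula (\ref{eq : LL}) has identical combinatorial form for every value of $q \in \CC$: the indexing shuffle set $\pi \shuffle \overline{\sigma}$ depends only on $\pi$ and $\sigma$, not on $q$. Thus for any $\pi \in \mathfrak{S}_n$ and $\sigma \in \mathfrak{S}_m$, applying $\Theta_q$ termwise to $L^{(0)}_\pi L^{(0)}_\sigma = \sum_{\tau \in \pi \shuffle \overline{\sigma}} L^{(0)}_\tau$ produces $\sum_\tau L^{(q)}_\tau = L^{(q)}_\pi L^{(q)}_\sigma = \Theta_q(L^{(0)}_\pi)\,\Theta_q(L^{(0)}_\sigma)$. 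Multiplicativity on pairs of basis elements extends by bilinearity to all of $\QSym$, so $\Theta_q$ is a $\CC$-algebra homomorphism.

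To identify $\Theta_q|_{\Lambda}$ with $\theta_{-q}$, I would evaluate both maps on the Schur basis of $\Lambda$. Gessel's expansion (\ref{equation.S0L}) reads $s_\la = \sum_{T \in \SYT(\la)} L^{(0)}_{\Des(T)}$, and applying $\Theta_q$ followed by Theorem \ref{thm.SL} gives $\Theta_q(s_\la) = \sum_T L^{(q)}_{\Des(T)} = S_\la(X;-q) = \theta_{-q}(s_\la)$. Since the Schur functions form a $\CC$-basis of $\Lambda$ and both $\Theta_q|_\Lambda$ and $\theta_{-q}$ are ring homomorphisms agreeing on this basis, they coincide on all of $\Lambda$. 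This equality is precisely the commutativity encoded by Figure \ref{fig.ring_map}.

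The main obstacle is conceptual rather than technical: one must recognise that the shuffle rule (\ref{eq : LL}) is $q$-uniform in its combinatorial shape, which is what forces the basis-level substitution $L^{(0)} \mapsto L^{(q)}$ to extend to a genuine ring homomorphism. Once this observation is in hand, the remaining ingredients --- the Gessel basis property and Theorem \ref{thm.SL} --- slot in directly to finish the proof.
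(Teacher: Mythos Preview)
Your proof is correct and follows essentially the same line as the paper's: both use the $q$-uniform shuffle formula (\ref{eq : LL}) to establish that the basis substitution $L^{(0)}\mapsto L^{(q)}$ extends to a ring homomorphism, and both invoke Theorem~\ref{thm.SL} to identify the restriction to $\Lambda$ with $\theta_{-q}$. The only difference is the test family: the paper checks $\Theta_q(h_n)=q_n(X;-q)$ on the algebra generators $h_n=L^{(0)}_{n,\emptyset}$ (using the one-row case of Theorem~\ref{thm.SL}), whereas you check $\Theta_q(s_\lambda)=S_\lambda(X;-q)$ on the full Schur basis---which is precisely the alternative the paper records in the Remark following the theorem. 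The paper's choice is marginally leaner since $\theta_{-q}$ is defined by its values on the $h_n$, but your linear-basis verification is equally valid.
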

\begin{figure}[htbp]
\begin{center}
\begin{tikzcd}[row sep = large, column sep = large]
\QSym \arrow{r}{\Theta_q} & \mathcal{P}^{(q)} \\
\Lambda \arrow{u} \arrow{r}{\theta_{-q}} & \Lambda_{-q} \arrow{u}
\end{tikzcd}
\end{center}
\caption{Map diagram relating $\QSym$, $\mathcal{P}^{(q)}$, $\Lambda$ and $\Lambda_{-q}$. Vertical maps are inclusion.}
 \label{fig.ring_map}
 \end{figure}
\begin{proof}
The existence and proper definition of $\Theta_q$ is a consequence of Equation (\ref{eq : LL}). To end the proof, it suffices to show that $\Theta_q(h_n)(X) = q_n(X;-q)$ for any $n \geq 0$.
But indeed, we have
\begin{align*}
\Theta_q(h_n)(X) %&= \Theta_q(s_n)(X)
&= \Theta_q(L^{(0)}_{n, \emptyset})(X)
= L^{(q)}_{n, \emptyset}(X) = S_n(X;-q)
= q_n(X;-q),
\end{align*}
where we used Theorem \ref{thm.SL} in the second-to-last equality.
This is the desired result. 
\end{proof}
\begin{remark}
Applying the morphism $\Theta_q$ to both the left-hand and right-hand sides of Equation (\ref{equation.S0L}) gives an alternative proof that $\theta_{t}\left(s_{\la/\mu}\right)(X) = S_{\la/\mu}(X;t)$. Indeed
\begin{equation*}
\Theta_q \left (s_{\la/\mu} \right)(X) = \sum_{T \in \SYT(\la/\mu)}\Theta_q \left (L^{(0)}_{n, \Des(T)} \right)(X) = \sum_{T \in \SYT(\la/\mu)}L^{(q)}_{n, \Des(T)}(X) = S_{\la/\mu}(X; -q).
\end{equation*}
\end{remark}
%
%%%%%%%%%%%%%%%%%%%%%%%%%%%%%%%%%%
\subsection{Cauchy-like formula for Hall-Littlewood symmetric functions}
%%%%%%%%%%%%%%%%%%%%%%%%%%%%%%%%%%
We use Theorem \ref{thm.SL} to provide an alternative proof of a classical Cauchy-like formula for Hall-Littlewood $S$-symmetric functions. Denote $Y=\{y_1, y_2, \dots \}$ an additional alphabet of commutating indeterminates independent of and commuting with $X$. Let $XY = \{x_iy_j\}_{i,j}$ be the product alphabet. We first show the following proposition.
\begin{proposition}
\label{prop.LLL}
Let $\pi \in \mathfrak{S}_n$ be a permutation. Extend the definition of $\Gamma^{(q)}$ to the alphabet $XY$ by considering $P_{\pi}$-partitions $(f,g): i \mapsto (f(i), g(i))$ with values in the set $\PP \times \PP^\pm$, which we equip with the lexicographic order. Declare a pair $(i,j) \in \PP \times \PP^\pm$ to be negative if and only if $j$ is negative. Then,
\begin{equation*}
 \Gamma^{(q)}(P_\pi)(XY) = \sum_{(f,g)\in\mathcal{L}_{\PP \times \PP^\pm}([n],<_{\pi})} \prod_{1\leq i\leq n}q^{[g(i)<0]}x_{f(i)}y_{|g(i)|}
 \end{equation*}
The $q$-fundamental indexed by $\pi$ on the product alphabet $XY$ satisfies
\begin{equation}
L^{(q)}_\pi(XY) = \Gamma^{(q)}(P_\pi)(XY) = \sum_{\sigma \circ \tau = \pi}L^{(0)}_\sigma(X)L^{(q)}_\tau(Y).
\end{equation}
\end{proposition}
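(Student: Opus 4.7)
The first display is essentially an unpacking of the definition of $\Gamma^{(q)}$ applied to the enlarged target set $\PP \times \PP^\pm$ equipped with the lexicographic order, with the sign of a pair declared to be the sign of its second coordinate. The identification $L^{(q)}_\pi(XY) = \Gamma^{(q)}(P_\pi)(XY)$ then follows because $L^{(q)}_\pi$ is quasisymmetric in $X$, so its value on the product alphabet $XY$ is recovered by applying the enriched $P$-partition expansion of $L^{(q)}_\pi$ (the same argument that produces the explicit formula (\ref{eq : Lqq})) with the enlarged target in place of $\PP^\pm$.

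For the main identity $\Gamma^{(q)}(P_\pi)(XY) = \sum_{\sigma \circ \tau = \pi} L^{(0)}_\sigma(X) L^{(q)}_\tau(Y)$, the plan is to construct a weight-preserving bijection
\[
\mathcal{L}_{\PP \times \PP^\pm}(P_\pi) \;\longleftrightarrow\; \bigsqcup_{\tau \in \mathfrak{S}_n} \mathcal{L}_{\PP}(P_{\pi\tau^{-1}}) \times \mathcal{L}_{\PP^\pm}(P_\tau),
\]
where $\mathcal{L}_{\PP}(P_\sigma)$ denotes ordinary (unsigned) $P_\sigma$-partitions. Given a pair $(f,g) \in \mathcal{L}_{\PP \times \PP^\pm}(P_\pi)$, the chain values $(a_k, b_k) = (f(\pi_k), g(\pi_k))$ form a lex-weakly-increasing sequence: the $X$-subsequence $(a_k)$ is weakly $\PP$-increasing, while within each maximal $a$-tie block the $(b_k)$ obey the enriched conditions inherited from $\pi$, and across blocks they are unconstrained. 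The idea is to separate this information by producing a permutation $\tau \in \mathfrak{S}_n$ that records the global enriched structure of the $g$-values (via a sign-sensitive sorting rule: at positive ties the smaller label sorts first, at negative ties the larger label does), setting $\sigma = \pi\tau^{-1}$, and defining $f'$, $g'$ by $f'(\sigma_k) = a_k$ together with the induced reassignment of $g$-values making $g'$ an enriched $P_\tau$-partition.

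The principal obstacle is verifying that $f'$ so defined is an ordinary $P_{\pi\tau^{-1}}$-partition, i.e.\ that the sequence $(a_k)$ is strictly increasing at every $\sigma$-descent. This amounts to the combinatorial claim that an $a$-tie at position $k$ of the $\pi$-chain forces $\sigma_k < \sigma_{k+1}$, which is proved by a case analysis on the ascent/descent pattern of $\pi$ at $k$ and on whether the enriched $(f,g)$-condition at the tie imposes a strict $g$-increase, a positive $g$-equality, or a negative $g$-equality; in each case the tie-breaking rule for $\tau$ is seen to prevent the forbidden $\sigma$-descent. Once this verification is in place, the weight $\prod q^{[g(i)<0]} x_{f(i)} y_{|g(i)|}$ is transported correctly, and the inverse map is constructed by the same formulas read backwards. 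An alternative, less transparent route is to compare the coefficients of each monomial $\prod x_{a_k} y_{b_k}$ on both sides directly, using the explicit formula (\ref{eq : Lqq}) and grouping the RHS terms by $(\sigma,\tau)$.
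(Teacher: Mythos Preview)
Your proposal is correct and follows precisely the approach the paper has in mind: the paper does not spell out a proof but simply cites \cite[thm 6.11]{Pet07}, and Petersen's argument there is exactly the bijective decomposition you sketch---split an enriched $P_\pi$-partition valued in $\PP\times\PP^\pm$ into an ordinary $P_\sigma$-partition (the $X$-part) and an enriched $P_\tau$-partition (the $Y$-part) via a sign-sensitive standardization, with the factorization $\sigma\tau=\pi$. Your identification of the key verification (that $a$-ties force $\sigma$-ascents) and the case analysis needed for it are on target; this is the $q$-analogue of Petersen's $q=1$ computation, with no new obstruction.
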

\begin{proof}
The proof is similar to the one in \cite[thm 6.11]{Pet07} and not detailed here.  
\end{proof}
\noindent In \cite[III. 4. Eq. (4.7)]{Mac99}, Macdonald provides a Cauchy-like formula for Hall-Littlewood symmetric functions:
\begin{equation}
\label{equation.qsS}
q_{n}(XY;t) = \sum_{\la \vdash n}s_\la(X)S_\la(Y;t).
\end{equation}
\begin{proposition}
Equation (\ref{equation.qsS}) is a direct consequence of Proposition \ref{prop.LLL} and Theorem \ref{thm.SL}.
\end{proposition}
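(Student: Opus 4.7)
The plan is to rewrite the left-hand side of (\ref{equation.qsS}) as a $q$-fundamental on the product alphabet, expand it via Proposition \ref{prop.LLL}, and then group the resulting sum over permutations into sums over pairs of tableaux via the Robinson--Schensted bijection. Throughout, I would substitute $t=-q$ (both sides are polynomial in $t$, so this is harmless).

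First I would note that Definition \ref{defn.S_HL} applied to the one-row partition $\lambda=(n)$, $\mu=\emptyset$ gives $S_{(n)}(X;t)=q_n(X;t)$, so the left-hand side of (\ref{equation.qsS}) equals $S_n(XY;-q)$. The unique element of $\SYT((n))$ is the single-row tableau with empty descent set, so Theorem \ref{thm.SL} applied to this shape on the alphabet $XY$ yields
\begin{equation*}
q_n(XY;-q) = S_n(XY;-q) = L^{(q)}_{n,\emptyset}(XY) = L^{(q)}_{\id_n}(XY),
\end{equation*}
where $\id_n \in \mathfrak{S}_n$ is the identity permutation.

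Next I would apply Proposition \ref{prop.LLL} to this expression. Since $\sigma \circ \tau = \id_n$ forces $\sigma = \tau^{-1}$, we obtain
\begin{equation*}
q_n(XY;-q) = \sum_{\tau \in \mathfrak{S}_n} L^{(0)}_{\tau^{-1}}(X)\, L^{(q)}_{\tau}(Y).
\end{equation*}
Now I would invoke the Robinson--Schensted correspondence: each $\tau \in \mathfrak{S}_n$ corresponds to a pair $(P,Q)$ of standard Young tableaux of the same shape $\lambda \vdash n$, and this bijection satisfies $\Des(\tau)=\Des(Q)$ and $\Des(\tau^{-1})=\Des(P)$. Since $L^{(q)}_\pi$ depends only on $\Des(\pi)$, regrouping yields
\begin{equation*}
q_n(XY;-q) = \sum_{\lambda \vdash n} \Bigl(\sum_{P \in \SYT(\lambda)} L^{(0)}_{\Des(P)}(X)\Bigr) \Bigl(\sum_{Q \in \SYT(\lambda)} L^{(q)}_{\Des(Q)}(Y)\Bigr).
\end{equation*}
The first inner sum is $s_\lambda(X)$ by Gessel's formula (\ref{equation.S0L}), and the second is $S_\lambda(Y;-q)$ by Theorem \ref{thm.SL}. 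Substituting $t=-q$ gives exactly (\ref{equation.qsS}).

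There is no serious obstacle; the argument is a direct chain of substitutions. The only point that requires any care is verifying that $L^{(0)}_{\tau^{-1}}$ really depends only on $\Des(\tau^{-1})$ (so the grouping by RS pairs is legitimate), which is immediate from (\ref{eq : Lqq}), and noting that the identity at $t=-q$ for all complex $q$ extends to an identity in the parameter $t$.
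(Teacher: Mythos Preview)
Your proof is correct and follows essentially the same route as the paper: identify $q_n(XY;-q)$ with $L^{(q)}_{\id_n}(XY)$, expand via Proposition~\ref{prop.LLL} using $\sigma=\tau^{-1}$, regroup via the Robinson--Schensted correspondence, and apply Theorem~\ref{thm.SL} (together with the $q=0$ case (\ref{equation.S0L})) to each factor. The only cosmetic difference is that you justify the opening identification $q_n(XY;-q)=L^{(q)}_{\id_n}(XY)$ explicitly through Theorem~\ref{thm.SL} on the one-row shape, whereas the paper invokes it directly.
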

\begin{proof}
Fix $q \in \CC$ and use Proposition \ref{prop.LLL} to write 
\begin{equation*}
q_{n}(XY;-q) = L_{\id_n}^{(q)}(XY) = \sum_{\sigma \in \mathfrak{S}_n}L_{\sigma^{-1}}^{(0)}(X)L_{\sigma\phantom{^{-1}}}^{(q)}\!\!(Y),
\end{equation*}
where $\id_n \in \mathfrak{S}_n$ is the identity permutation. The RS correspondence allows to reindex the sum over standard Young tableaux.
\begin{align*}
q_{n}(XY;-q) &=\sum_{\la \vdash n} \sum_{T, U \in \SYT(\la)}L_{n, \Des(T)}^{(0)}(X)L_{n, \Des(U)}^{(q)}(Y)\\
&=\sum_{\la \vdash n} \left(\sum_{T \in \SYT(\la)}L_{n, \Des(T)}^{(0)}(X)\right)\left(\sum_{U \in \SYT(\la)}L_{n, \Des(U)}^{(q)}(Y)\right).
\end{align*}
Applying Theorem \ref{thm.SL} yields Equation (\ref{equation.qsS}).
\end{proof}
%%%%%%%%%%%%%%%%%%%%%%%%%%%%%%%%%%
%%%%%%%%%%%%%%%%%%%%%%%%%%%%%%%%%%
%%%%%%%%%%%%%%%%%%%%%%%%%%%%%%%%%%
\section{Proof of Theorem \ref{thm.SG}}
%%%%%%%%%%%%%%%%%%%%%%%%%%%%%%%%%%
%%%%%%%%%%%%%%%%%%%%%%%%%%%%%%%%%%
%%%%%%%%%%%%%%%%%%%%%%%%%%%%%%%%%%
\label{section.proof}
We now approach the proof of Theorem \ref{thm.SG}.
We generalize the setting somewhat:
Let $\prec$ be a total order on $\PP^\pm$
(for example, our order $<$, or any other total order).
Define a further binary relation $R$ on $\PP^\pm$ as follows:
For any two elements $i,j\in\mathbb{P}^{\pm}$, set%
\[
\left(  i\ R\ j\right)  \ \Longleftrightarrow\ \left(  i\preccurlyeq j\text{
but not }i=j\in-\mathbb{P}\right)  .
\]

\begin{defn}
For each non-negative integer $n$, define the formal power series
\[
H_n :=
H_{n}(X^\pm)
:=\sum_{\substack{\left(  i_{1},i_{2},\ldots,i_{n}\right)  \in\left(
\mathbb{P}^{\pm}\right)  ^{n};\\i_{1}\ R\ i_{2}\ R\ \cdots\ R\ i_{n}}}
x_{i_{1}}x_{i_{2}}\cdots x_{i_{n}}
\]
on the alphabet $X^\pm = \{x_{-1}, x_1, x_{-2}, x_2, \dots \}$.
Moreover, set $H_{n}=0$ for all $n<0$.
\end{defn}
Furthermore, for any labelled poset $([n], <_P)$, define
the new generating function
$\Gamma^{\pm}\left([n], <_P\right) \in \CC \left[\left[ X^\pm \right]\right]$
of enriched $P$-partitions by
\begin{equation*}
\Gamma^{\pm}([n], <_P) := \sum_{f\in\mathcal{L}_{\PP^\pm}([n],<_{P})} \prod_{1\leq i\leq n}x_{f(i)}.
\end{equation*}
This is a lift of our $q$-deformed generating function
$\Gamma^{(q)}([n], <_P)$.
Indeed, recall the homomorphism $\varpi : \CC \left[\left[X^\pm\right]\right] \longrightarrow \CC\left[\left[X\right]\right]$, such that $\varpi(x_i)=q^{[i<0]}x_{|i|}$ for $x_i \in X^\pm$. Clearly $$\varpi(\Gamma^{\pm}([n], <_P)) = \Gamma^{(q)}([n], <_P).$$
\begin{proposition}
Let $\la$ and $\mu$ be two partitions such that $\la/\mu$ is a skew shape. We have
\begin{equation}
\Gamma^{\pm}([n], <_{\la/\mu})
=\det\left(  H_{\lambda_{i}-\mu_{j}-i+j}\right)  _{i,j\in\left[  k\right]  }.
\label{eq.thm.Ylamu-enum.1}
\end{equation}
\end{proposition}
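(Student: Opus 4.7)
The plan is to prove this Jacobi–Trudi-style identity via a Lindström–Gessel–Viennot-type sign-reversing involution, paralleling the classical proof of the Schur Jacobi–Trudi formula. First, I would expand the determinant over permutations:
\[
\det(H_{\la_i-\mu_j-i+j})_{i,j \in [k]} = \sum_{\sigma \in \mathfrak{S}_k} \operatorname{sgn}(\sigma) \prod_{i=1}^k H_{\la_i - \mu_{\sigma(i)} - i + \sigma(i)},
\]
with the convention $H_m = 0$ for $m<0$. Unpacking the definition of each $H_m$, the right-hand side becomes a signed sum over $k$-tuples $\mathbf{w} = (w_1,\ldots,w_k)$, where each $w_i$ is an $R$-increasing sequence of length $\la_i - \mu_{\sigma(i)} - i + \sigma(i)$ in $\PP^{\pm}$, weighted by $\operatorname{sgn}(\sigma)\prod_{i,r} x_{w_i(r)}$.

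Next, I would interpret each such tuple as a row-filling of a shifted skew shape, placing $w_i$ into row $i$ starting at column $\mu_{\sigma(i)} + i - \sigma(i) + 1$ and ending at column $\la_i$. For $\sigma=\id$ this is exactly a row-$R$-increasing filling of $\la/\mu$, and the enriched $P_{\la/\mu}$-partitions are precisely those fillings that additionally satisfy the column condition of Definition~\ref{def : enriched}(ii). I would then define a sign-reversing involution $\Phi$ on the ``bad'' tuples (either $\sigma \neq \id$, or $\sigma=\id$ with some column violation): scan for the leftmost column, and within it the topmost pair of adjacent rows $i, i+1$, at which the column condition fails, then swap the tails of $w_i$ and $w_{i+1}$ from that column onward. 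This swap composes $\sigma$ with the transposition $(i\ i+1)$, flipping $\operatorname{sgn}(\sigma)$ while preserving the total monomial, and the fixed points of $\Phi$ are exactly the valid enriched $P_{\la/\mu}$-partitions. Summing over them yields $\Gamma^{\pm}([n],<_{\la/\mu})$, establishing the identity.

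The main obstacle will be verifying that the tail-swap always produces two legitimate $R$-increasing sequences. This requires a careful case analysis on the type of column conflict (a strict inversion $a \succ b$ versus an equal-positive repetition $a = b \in \PP$) and uses the complementary interplay between the row-relation $R$ (weak $\preccurlyeq$ forbidding equal negatives) and its column counterpart $\overline{R}$ (strict $\prec$ permitting equal negatives). The trichotomy on $\PP^{\pm}$ provided by $R$ and $\overline{R}$ is the combinatorial crux that makes the involution well-defined and sign-reversing --- the same underlying principle as in the classical Schur case, but with additional bookkeeping for the marked entries.
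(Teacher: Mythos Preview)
Your plan is sound and would work, but the paper takes a much shorter route: rather than constructing the sign-reversing involution by hand, it simply verifies that the relation $R$ is \emph{semitransitive} (which follows because both $R$ and its complement $\overline{R}$ are transitive) and then invokes \cite[Theorem~11]{GesVie89} as a black box. That theorem already identifies the determinant $\det(H_{\lambda_i-\mu_j-i+j})$ (denoted $s_{\lambda/\mu}^R$ there) with the generating function for $R$-tableaux of shape $\lambda/\mu$, which are precisely the enriched $P_{\lambda/\mu}$-partitions.

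Your tail-swap involution is exactly the argument that underlies the Gessel--Viennot theorem, and the ``trichotomy on $\PP^{\pm}$ provided by $R$ and $\overline{R}$'' you single out as the crux is the semitransitivity hypothesis in disguise. So the two proofs are the same mathematics at different levels of abstraction: the paper's version is a five-line citation, while yours is self-contained but requires carrying out the case analysis you flag. The obstacle you identify (that the swapped tails remain $R$-increasing) is real but standard, and is precisely what semitransitivity guarantees in the general framework.
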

\begin{proof}
We want to apply \cite[\S 7]{GesVie89}. To this end, we
introduce a new relation. Let $\overline{R}$ be the complement of the binary relation $R$. (Thus,
$\overline{R}$ is the binary relation on $\mathbb{P}^{\pm}$ defined by
$\left(  i\ \overline{R}\ j\right)  \ \Longleftrightarrow\ \left(  \text{not
}i\ R\ j\right)  $.) It is easy to see that both relations $R$ and
$\overline{R}$ are transitive. Hence, the relation $R$ is semitransitive
(meaning that if $a,b,c,d\in\mathbb{P}^{\pm}$ satisfy $a\ R\ b\ R\ c$, then $a\ R\ d$ or $d\ R\ c$). Therefore, \cite[Theorem 11]{GesVie89} yields that the power series $s_{\lambda/\mu}^{R}$ (defined in \cite[\S 7]{GesVie89}) counts $R$-tableaux of shape $\lambda/\mu$. But the $R$-tableaux of shape $\lambda/\mu$ are precisely the enriched $P_{\la/\mu}$-partitions,
whereas the series $s_{\lambda/\mu}^R$ is our $\det\left(  H_{\lambda_{i}-\mu_{j}-i+j}\right)  _{i,j\in\left[  k\right]  }$.
\end{proof}
In order to prove Theorem \ref{thm.SG} from (\ref{eq.thm.Ylamu-enum.1}), we need to show that $\varpi(H_n(X^\pm)) = q_n(X; -q)$ for each $n \in \ZZ$. We proceed in three steps. First we have the following proposition, which follows easily from the definition of $H_n$.
\begin{proposition}
\label{prop.H-as-sum}Let $n \in \ZZ$. Then,
\[
H_{n}=\sum_{k=0}^{n}\ \ \sum_{\substack{U\text{ is a size-}k\\\text{subset of
}-\mathbb{P}}}\ \ \sum_{\substack{V\text{ is a size-}\left(  n-k\right)
\\\text{multisubset of }\mathbb{P}}}\left(  \prod_{u\in U}x_{u}\right)
\left(  \prod_{v\in V}x_{v}\right)
\]
(where the product over $v\in V$ takes each element with its multiplicity). In
particular, $H_{n}$ does not depend on the order $\prec$.
\end{proposition}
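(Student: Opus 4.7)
The strategy is to set up a direct bijection between the $R$-chains of length $n$ (i.e., the tuples indexing $H_n$) and the pairs $(U,V)$ indexing the right-hand side, and then observe that the right-hand side makes no reference to $\prec$.

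First, I would unpack the relation $R$: by definition, $i\ R\ j$ holds iff $i\prec j$ strictly, or $i=j$ with $i\in\PP$. In particular $i\ R\ i$ fails for $i\in-\PP$ and holds for $i\in\PP$. Consequently, in any chain $i_1\ R\ i_2\ R\ \cdots\ R\ i_n$, the negative entries must all be distinct (two equal negatives are never $R$-related), while positive entries may repeat.

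Given such a chain, I would define $U$ to be the set of indices that are negative and $V$ to be the multiset of indices that are positive (with multiplicities). By the observation above, $U$ is an honest set, say of some size $k$, and $V$ is a multisubset of $\PP$ of size $n-k$. The monomial $x_{i_1}x_{i_2}\cdots x_{i_n}$ is then exactly $\left(\prod_{u\in U}x_u\right)\left(\prod_{v\in V}x_v\right)$. Conversely, starting from any pair $(U,V)$ with $U$ a size-$k$ subset of $-\PP$ and $V$ a size-$(n-k)$ multisubset of $\PP$, there is a \emph{unique} weakly $\prec$-increasing arrangement $(i_1,i_2,\dots,i_n)$ of the multiset $U\sqcup V$ (since $\prec$ is a total order on $\PP^\pm$). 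I would then check that this arrangement is automatically an $R$-chain: the only adjacent equalities $i_j=i_{j+1}$ that can occur come from repeated elements of $V$, which are positive, so $i_j\ R\ i_{j+1}$ holds; all strict inequalities $i_j\prec i_{j+1}$ also give $i_j\ R\ i_{j+1}$.

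These two constructions are mutually inverse, establishing the claimed identity. The only subtle point — really the only thing to verify — is that the sorted sequence is indeed an $R$-chain, for which the distinctness of the elements of $U\subseteq-\PP$ is essential; this is where the peculiar definition of $R$ (excluding equalities in $-\PP$) plays its role. Finally, the right-hand side of the displayed formula involves no mention of the order $\prec$ whatsoever, so $H_n$ is independent of the choice of total order, proving the ``in particular'' clause.
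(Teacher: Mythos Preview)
Your proof is correct and is precisely the argument the paper has in mind: the paper states only that the proposition ``follows easily from the definition of $H_n$'' without giving details, and your bijection between $R$-chains and pairs $(U,V)$ via sorting the multiset $U\sqcup V$ is the natural way to flesh this out. The one point worth making explicit (which you do implicitly) is that any $R$-chain is automatically weakly $\prec$-increasing, so the forward map to $(U,V)$ is injective; otherwise there is nothing to add.
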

Secondly, we express $q_n(X;-q)$ in terms of elementary and complete homogeneous symmetric functions. 
\begin{lem}
\label{lem.qeh}
Let $n \in \ZZ$ and $q \in \CC$. Then,
\begin{equation}
q_{n}(X;-q) =\sum_{k=0}^{n}q^{k}e_{k}h_{n-k}.
\label{eq.prop.Qn.fmls.4}%
\end{equation}
\end{lem}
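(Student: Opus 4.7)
The plan is to derive the identity directly from the generating function (\ref{eq : genfun-qn}) by specializing $t = -q$ and factoring the resulting product.

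First, I would substitute $t = -q$ into (\ref{eq : genfun-qn}) to obtain
\[
\sum_{n \geq 0} q_n(X;-q)\, u^n = \prod_i \frac{1 + x_i q u}{1 - x_i u}.
\]
Then I would split the right-hand side as a product of two independent generating functions:
\[
\prod_i \frac{1 + x_i q u}{1 - x_i u}
= \left(\prod_i (1 + x_i (qu))\right) \cdot \left(\prod_i \frac{1}{1 - x_i u}\right).
\]
Next, I would recognize each factor using the classical generating-function identities for elementary and complete homogeneous symmetric functions: $\prod_i (1 + x_i v) = \sum_{k \geq 0} e_k v^k$ (applied with $v = qu$) and $\prod_i (1 - x_i u)^{-1} = \sum_{m \geq 0} h_m u^m$. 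This yields
\[
\sum_{n \geq 0} q_n(X;-q)\, u^n
= \left(\sum_{k \geq 0} q^k e_k u^k\right)\left(\sum_{m \geq 0} h_m u^m\right).
\]

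Finally, I would extract the coefficient of $u^n$ on both sides, which gives exactly $q_n(X;-q) = \sum_{k=0}^n q^k e_k h_{n-k}$, completing the proof for $n \geq 0$. For $n < 0$ the identity is trivial since both sides vanish by convention. There is no real obstacle here; the only thing to check carefully is the sign bookkeeping under the substitution $t \mapsto -q$, but this is immediate since $1 - x_i t u$ becomes $1 + x_i q u$ and matches the standard $e_k$ generating function with argument $qu$.
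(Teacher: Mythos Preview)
Your proof is correct and follows essentially the same approach as the paper: specialize the generating function \eqref{eq : genfun-qn} at $t=-q$, factor the product into the generating functions for $e_k$ and $h_m$, and extract the coefficient of $u^n$. Your explicit mention of the $n<0$ case is a small addition, but otherwise the arguments coincide.
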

\begin{proof}
From \eqref{eq : genfun-qn}, we obtain
\begin{align*}
\sum_{n}q_{n}(X;-q)u^{n}=\prod_{i\geq1}\dfrac{1+qx_{i}u}{1-x_{i}u}  &  =\underbrace{\left(
\prod_{i\geq1}\left(  1+qx_{i}u\right)  \right)  }_{=\sum_{n}q^{n}e_{n}t^{n}}\underbrace{\left(  \prod_{i\geq1}\dfrac{1}{1-x_{i}u}\right)
}_{=\sum_{n}h_{n}u^{n}}\\
&  =\left(  \sum_{n}q^{n}e_{n}u^{n}\right)  \left(  \sum
_{n}h_{n}u^{n}\right) =\sum_{n}\left(  \sum_{k=0}^{n}q^{k}e_{k}%
h_{n-k}\right)u^{n}.
\end{align*}
Extracting coefficients in $u^n$ on both sides yields the desired result.
\end{proof}
Finally, use Proposition \ref{prop.H-as-sum} and Lemma \ref{lem.qeh} to relate $H_n$ and $q_n$.  
\begin{proposition}
\label{prop.Q=piH}Let $n\in\mathbb{Z}$. Then,%
\[
\varpi\left(  H_{n}(X^\pm)\right)  =q_{n}(X;-q).
\]
\end{proposition}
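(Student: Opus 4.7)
The plan is to simply combine Proposition \ref{prop.H-as-sum} with Lemma \ref{lem.qeh}, since the former expresses $H_n$ as a very explicit sum and the latter gives the expansion of $q_n(X;-q)$ in terms of $e_k h_{n-k}$. The match is essentially bookkeeping about how $\varpi$ acts on monomials indexed by $\PP^\pm$.

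First I would handle the degenerate cases $n<0$ trivially, since both sides vanish by definition. For $n \geq 0$, I would write out $H_n$ using Proposition \ref{prop.H-as-sum} and then apply $\varpi$ term by term. The key computational observation is that for $u \in -\PP$ we have $\varpi(x_u) = q\, x_{|u|}$, while for $v \in \PP$ we have $\varpi(x_v) = x_v$. Hence
\[
\varpi\!\left(\prod_{u \in U} x_u\right) = q^{|U|} \prod_{u \in U} x_{|u|},
\qquad
\varpi\!\left(\prod_{v \in V} x_v\right) = \prod_{v \in V} x_v.
\]

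Next I would recognize the two sums that appear after applying $\varpi$. As $U$ ranges over size-$k$ subsets of $-\PP$, the set of absolute values $\{|u| : u \in U\}$ ranges over all size-$k$ subsets of $\PP$, so the inner sum over $U$ yields exactly $q^k e_k(X)$. Similarly, as $V$ ranges over size-$(n-k)$ multisubsets of $\PP$, the monomials $\prod_{v \in V} x_v$ sum to $h_{n-k}(X)$. Assembling these identifications produces
\[
\varpi(H_n) = \sum_{k=0}^{n} q^{k}\, e_k(X)\, h_{n-k}(X),
\]
which is precisely $q_n(X;-q)$ by Lemma \ref{lem.qeh}.

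There is no real obstacle here; the main point is just to verify that $\varpi$ acts cleanly on the factored form of $H_n$, which it does because the two products over $U$ and $V$ live in disjoint index sets ($-\PP$ and $\PP$ respectively). The only minor care needed is the combinatorial reindexing from size-$k$ subsets of $-\PP$ to size-$k$ subsets of $\PP$ via negation, and the interpretation of a multisubset of $\PP$ as generating a monomial in $h_{n-k}$.
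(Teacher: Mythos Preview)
Your proposal is correct and follows essentially the same route as the paper's proof: apply $\varpi$ to the formula from Proposition~\ref{prop.H-as-sum}, use that $\varpi(x_u)=qx_{|u|}$ for $u\in-\PP$ and $\varpi(x_v)=x_v$ for $v\in\PP$, identify the resulting inner sums as $e_k$ and $h_{n-k}$, and conclude via Lemma~\ref{lem.qeh}. Your explicit treatment of the case $n<0$ is a small addition but otherwise the arguments coincide.
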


\begin{proof}
% From Proposition \ref{prop.H-as-sum}, we know that%
% \[
% H_{n}=\sum_{k=0}^{n}\ \ \sum_{\substack{U\text{ is a size-}k\\\text{subset of
% }-\mathbb{P}}}\ \ \sum_{\substack{V\text{ is a size-}\left(  n-k\right)
% \\\text{multisubset of }\mathbb{P}}}\left(  \prod_{u\in U}x_{u}\right)
% \left(  \prod_{v\in V}x_{v}\right)  .
% \]
Applying the map $\varpi$ to both sides of Proposition \ref{prop.H-as-sum}, we obtain%
\begin{align}
\varpi\left(  H_{n}\right)   
%&  =\varpi\left(  \sum_{k=0}^{n}\ \ \sum
%_{\substack{U\text{ is a size-}k\\\text{subset of }-\mathbb{P}}}\ \ \sum
%_{\substack{V\text{ is a size-}\left(  n-k\right)  \\\text{multisubset of
%}\mathbb{P}}}\left(  \prod_{u\in U}x_{u}\right)  \left(  \prod_{v\in V}%
%x_{v}\right)  \right) \\
&  =\sum_{k=0}^{n}
\ \ \sum_{\substack{U\text{ is a size-}k\\\text{subset of
}-\mathbb{P}}}
\ \ \sum_{\substack{V\text{ is a size-}\left(  n-k\right)
\\\text{multisubset of }\mathbb{P}}}
\left(  \prod_{u\in U}\varpi\left(  x_{u}\right) \right)
\left(  \prod_{v\in V}\varpi\left(  x_{v}\right) \right)
\label{pf.prop.Q=piH.1}
\end{align}
(since $\varpi$ is a continuous $\kk$-algebra homomorphism).
However, each size-$k$ subset $U$ of $-\PP$ satisfies
\[
\prod_{u\in U}\underbrace{\varpi
\left(  x_{u}\right)  }_{\substack{=qx_{-u}\\
\text{(since }u\in-\mathbb{P}\text{)}}}
= \prod_{u\in U}\left(qx_{-u}\right) = q^k\prod_{u \in U} x_{-u}
\qquad \text{(since $\left\vert U\right\vert =k$),}
\]
whereas each size-$\left(n-k\right)$-multisubset $V$ of $\PP$
satisfies
\[
\prod_{v\in V}\underbrace{\varpi\left(  x_{v}\right)
}_{\substack{=x_{v}\\\text{(since }v\in\mathbb{P}\text{)}}}
= \prod_{v\in V} x_v.
\]
In light of these two equalities, we can rewrite
\eqref{pf.prop.Q=piH.1} as
\begin{align*}
\varpi\left(  H_{n}\right)   
% &  =\sum_{k=0}^{n}\ \ \sum_{\substack{U\text{ is a size-}k\\\text{subset of
% }-\mathbb{P}}}\ \ \sum_{\substack{V\text{ is a size-}\left(  n-k\right)
% \\\text{multisubset of }\mathbb{P}}}\underbrace{\left(  \prod_{u\in U}\left(
% qx_{-u}\right)  \right)  }_{\substack{=q^{k}\left(  \prod_{u\in U}%
% x_{-u}\right)  \\\text{(since }\left\vert U\right\vert =k\text{)}}}\left(
% \prod_{v\in V}x_{v}\right) \\
&  =\sum_{k=0}^{n}\ \ \sum_{\substack{U\text{ is a size-}k\\\text{subset of
}-\mathbb{P}}}\ \ \sum_{\substack{V\text{ is a size-}\left(  n-k\right)
\\\text{multisubset of }\mathbb{P}}}q^{k}\left(  \prod_{u\in U}x_{-u}\right)
\left(  \prod_{v\in V}x_{v}\right) \\
&  =\sum_{k=0}^{n}q^{k}\underbrace{\left(  \sum_{\substack{U\text{ is a
size-}k\\\text{subset of }-\mathbb{P}}}\ \ \prod_{u\in U}x_{-u}\right)
}_{\substack{=\sum_{\substack{U\text{ is a size-}k\\\text{subset of
}\mathbb{P}}}\ \ \prod_{u\in U}x_{u}\\=e_{k}}}\underbrace{\left(
\sum_{\substack{V\text{ is a size-}\left(  n-k\right)  \\\text{multisubset of
}\mathbb{P}}}\ \ \prod_{v\in V}x_{v}\right)  }_{=h_{n-k}}
=\sum_{k=0}^{n}q^{k}e_{k}h_{n-k} .
\end{align*}
By Equation (\ref{eq.prop.Qn.fmls.4}), we can rewrite this
as $\varpi\left(H_n\right) = q_{n}(X;-q)$.
\end{proof}
%\acknowledgements{}

%% if you use biblatex then this generates the bibliography
%% if you use some other method then remove this and do it your own way
\printbibliography

\end{document}